\newtheorem{thm}{Theorem}
\begin{document}

\begin{frontmatter}

\title{Another Proof of Oscar Rojo's Theorems}
\author[EP,MiS]{Hao Chen}
\ead{hao.chen@polytechnique.edu}
\address[EP]{Ecole Polytechnique, Palaiseau, France}
\address[MiS]{Max Planck Institute for Mathematics in the Sciences, Leipzig, Germany}

\author[MiS]{J\"urgen Jost\corref{cor1}}
\ead{jost@mis.mpg.de}

\begin{abstract}
We present here another proof of Oscar Rojo's theorems about the spectrum of graph Laplacian on certain balanced trees, by taking advantage of the symmetry properties of the trees in question, and looking into the eigenfunctions of Laplacian.
\end{abstract}

\begin{keyword}
Tree \sep Graph Laplacian \sep Graph spectrum
\MSC[2010] 05C50 \sep 05C70
\end{keyword}

\end{frontmatter}

\section{Introduction}

Oscar Rojo has proved, first for balanced binary trees \cite{Rojo2002}, then extended to balanced trees such that vertices at the same level $l$ are of the same degree $d(l)$ \cite{Rojo2005}, that 

\begin{thm}\label{goal}
The spectrum of the graph Laplacian on such trees, is the union of the eigenvalues of $T_j, 1\le j\le k$, where $T_k$ is a tridiagonal $k\times k$ matrix
\begin{equation}
T_k=
\begin{pmatrix}
 1 & \sqrt{d(2)-1} & 0 &\cdots & 0 \\
\sqrt{d(2)-1} & d(2) & \ddots &   & \vdots\\
0 & \ddots & \ddots & \ddots & 0\\
\vdots &   & \ddots & d(k-1) & \sqrt{d(k)}\\
0 & \cdots & 0 & \sqrt{d(k)} & d(k)\\
\end{pmatrix}
\end{equation}
and $T_j$ with $j<k$ is the $j\times j$ principal submatrix of $T_k$. The multiplicity of each eigenvalue of $T_j$ as an eigenvalue of the Laplacian matrix is at least the difference of population between level $j$ et level $j+1$. 
\end{thm}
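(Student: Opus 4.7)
\emph{Proof plan.} The plan is to exploit the hierarchical symmetry of the balanced tree to construct an explicit eigenbasis of $L$. I adopt the convention that level $1$ consists of the leaves (so $d(1)=1$, consistent with the $1$ in the upper-left entry of $T_k$) and level $k$ is the root; write $n_l$ for the number of vertices at level $l$ and $l(u)$ for the level of a vertex $u$. The recursions $n_{l-1} = (d(l)-1)\, n_l$ for $2 \le l \le k-1$, together with $n_{k-1} = d(k)\, n_k$ and $n_k = 1$, will be the only combinatorial facts I need.

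The first step is to recover the eigenvalues of $T_k$ from radial functions. The $k$-dimensional subspace $\{f : f(u) = g(l(u))\}$ is $L$-invariant, and writing $Lf=\lambda f$ level-by-level produces a non-symmetric tridiagonal recurrence in $g$; the weighted substitution $\tilde g(l) = \sqrt{n_l}\, g(l)$ should symmetrize it into exactly $T_k \tilde g = \lambda \tilde g$. The two atypical entries --- the leading $1$ and the trailing off-diagonal $\sqrt{d(k)}$ --- will arise, respectively, from the leaf row (degree $1$) and the root row (no parent).

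For each $1 \le j < k$, I would construct eigenfunctions for $T_j$ as ``antisymmetric excitations at level $j+1$''. Fix a vertex $w$ at level $j+1$ with children $v_1,\dots,v_m$ (so $m=n_j/n_{j+1}$, treating the root case $j+1=k$ uniformly), pick $(c_i)$ with $\sum_i c_i = 0$, and set $f(u) = c_i\, g(l(u))$ on the subtree rooted at $v_i$ and $f \equiv 0$ elsewhere. The vanishing sum kills $(Lf)(w)$, the equation is trivially satisfied above $w$, and inside each subtree the recurrence is the previous one truncated at level $j$ with the new top boundary $f(w)=0$; after the same symmetrization this is exactly the $j\times j$ principal submatrix $T_j$. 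The last off-diagonal becomes $\sqrt{d(j)-1}$ rather than $\sqrt{d(k)}$ because the boundary condition at $v_i$ is now $f(\text{parent})=0$ rather than a weighted value at level $j+1$.

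Finally, multiplicity and completeness follow from a dimension count. At each of the $n_{j+1}$ vertices $w$ at level $j+1$, the coefficient space $\{(c_i):\sum_i c_i=0\}$ has dimension $m-1$, so each eigenvalue of $T_j$ acquires $n_{j+1}(m-1)=n_j-n_{j+1}$ linearly independent eigenfunctions; independence across different $w$ comes from disjoint supports, and across different $j$ from distinct highest-nonvanishing-levels. The telescoping identity $\sum_{j=1}^{k} j(n_j - n_{j+1}) = \sum_l n_l = |V|$, with $n_{k+1}:=0$, then shows these eigenfunctions already exhaust $L^2(V)$, so the full spectrum of $L$ is the union of spectra of the $T_j$'s and the multiplicity inequalities become equalities. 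The step I expect to require the most care is the uniform treatment of the root case $j+1=k$ alongside the generic case $j+1<k$, together with careful bookkeeping of the two exceptional boundary rows when symmetrizing the radial recurrence.
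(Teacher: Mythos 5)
Your proposal is correct, and it reaches the theorem by a genuinely different route from the paper. The paper's central tool is Biyikoglu's nodal domain theorems for trees: simplicity of eigenvalues whose eigenfunctions have no vanishing coordinate is combined with the $S_v$-symmetry to prove that every eigenfunction of the (Dirichlet) Laplacian not vanishing at the root can be taken stratified, and eigenfunctions of the full Laplacian are then propagated upward from stratified Dirichlet eigenfunctions on subtrees via differences $f-\tau(i,j)\circ f$ over transposed siblings. You bypass that machinery entirely: you note that the $k$-dimensional radial subspace is $L$-invariant (immediate from the per-level regularity of degrees), and you build the remaining eigenfunctions as zero-sum combinations $c_i\,g(l)$ over the sibling subtrees below a fixed vertex $w$ --- these are in fact the same eigenfunctions the paper obtains (its $f-\tau(i,j)\circ f$ is your $c=e_i-e_j$), but constructed directly rather than deduced from a structure theorem. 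Both arguments close with the identical telescoping count $\sum_j j(n_j-n_{j+1})=|V|$ and the identical diagonal conjugation of the radial recurrence; note only that $\tilde g(l)=\sqrt{n_l}\,g(l)$ alone yields off-diagonal entries $-\sqrt{d(l)-1}$, so as in the paper one should conjugate by $(-1)^l\sqrt{n_l}$ to land exactly on $T_j$ (the spectrum is of course unaffected). Your approach buys a self-contained, purely linear-algebraic proof independent of the sign-graph literature; the paper's buys the stronger structural statement that \emph{every} eigenfunction is of this form, established before any counting. Two details to nail down in a full write-up: independence across different $j$ needs the last entry of each $T_j$-eigenvector to be nonzero (true since $T_j$ is an irreducible Jacobi matrix), and the multiplicity is exactly $n_j-n_{j+1}$ only when an eigenvalue occurs in a single $T_j$ --- the theorem itself claims only the lower bound, which your count delivers.
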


Please note that the levels are numerated from the leaves' level in Oscar Rojo's papers, i.e. the leaves are at level 1 and the root at level k.

He then found similar results \cite{Rojo2006} for a tree obtained by identifying the roots of two balanced trees. We call all the trees studied in these papers ``symmetric trees'' for a reason we will explain later.

We will give here another proof of Oscar Rojo's theorems, by looking at the eigenfunctions of Laplacian, and using the result of B{\i}y{\i}koglu's study on sign graphs \cite{Biyikoglu2003}. By our approach, we can see that because of the symmetry properties of the trees in question, the eigenfunctions of Laplacian have a stratified structure, which will be very useful for the proof.

The main goal of this paper is to prove the theorem \ref{goal}. The case studied in \cite{Rojo2006} will be briefly discussed at the end to show that our approach can be easily extended.

\section{Preparation}

Before we start, we shall agree on the notations and the terminology, which, for our convenience, are not the same as in Oscar Rojo's papers. We shall also study the symmetry of the trees in question to justify the name we gave to them.

In a rooted tree, every edge links a parent to one of its child. The \emph{root} is the only vertex without parent, and the \emph{leaves} are the vertices without child. We denote $C_v$ the set of children of vertex $v$, and label the children by integers $1,\ldots,|C_v|$.

The \emph{level} of a vertex $v$ is defined by its distance to the root, denoted by $l_v$. Therefore, unlike in Oscar Rojo's papers, for a $k$-level tree, root will be at level 0, and leaves at level $k-1$. If there exists a path $(v_0,...,v_n)$ such that $v_0=p$, $v_n=q$ and $v_i$ is the child of $v_{i-1}$ for all $1\le i\le n$, we call $p$ \emph{ancestor} of $q$ and $q$ \emph{descendant} of $p$. A vertex $v$ and all its descendants form the maximal subtree rooted at $v$, denoted by $T_v$.

There is one and only one path from the root to any non-root vertex. A vertex can thus be uniquely identified by listing in order the labels of non-root vertices on this path. For example, a vertex with identity $\{3,2,4,1\}$ means that it's the 1\textsuperscript{st} child of the 4\textsuperscript{th} child of the 2\textsuperscript{nd} child of the 3\textsuperscript{rd} child of the root. We call this list of label ``identity'' of $v$ and denote $I_v$. The root's identity is defined as an empty list $\{ \}$. The length of $I_v$ equals its level $l_v$. We denote $I_v^i$ the $i$\textsuperscript{th} number in $I_v$.

If a vertex $v$ have $n$ children, we denote $S_v$ the symmetric group $S_n$ acting on $C_v$ by relabeling the children of $v$. The action of $S_v$ on the vertex set $V_T$ (or by abuse of language, on the tree $T$) is defined by its action on $C_v$. That is, if a vertex $u$ with label $i$ is a child of $v$, it will be relabeled by $\sigma\in S_v$ to $\sigma(i)$. Labels of other vertices are not changed. In the words of identity, $\sigma\in S_v$ changes $I^{l_v}_u$ to $\sigma(I^{l_v}_u)$ if $u\in T_v$, and does not touch other numbers in $I_u$ or identities of other vertices.

\begin{figure}[ht]
  \centering
  \includegraphics[scale=0.25]{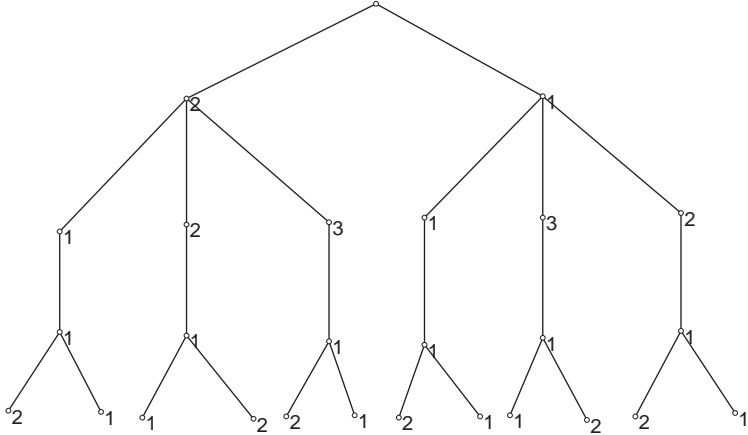}
  \caption{A typical symmetric tree. All the non-root vertices are labeled.}
\end{figure}

The trees studied in the papers of Oscar Rojo are such that vertices at the same level $l$ have the same number of children $c(l)$, i.e. are of the same degree $d(l)$ (see figure 1). Let the total number of vertices at level $l$ be $n(l)$, we have $n(0)=1$ and $c(l)=d(l)-1$, $n(l)=n(l-1)c(l-1)=\prod_{i=0}^{l-1}c(i)$ for $l>0$.

We desire to call them "symmetric trees" because they are invariant under the action of $S_v$ for all vertices $v$. Here ``invariant'' implies that the vertex set $V_T$ is invariant. Since vertices are identified by their identities, the invariance means that for any vertex, its identity can be found in the vertex set of the tree after the action of $S_v$. In this sense, the action of $S_v$ on a symmetric tree is actually a permutation of the subtrees rooted at the children of $v$. See figure 2 for a tree not invariant under the action of $S_v$.

\begin{figure}[ht]
  \centering
  \includegraphics[scale=0.25]{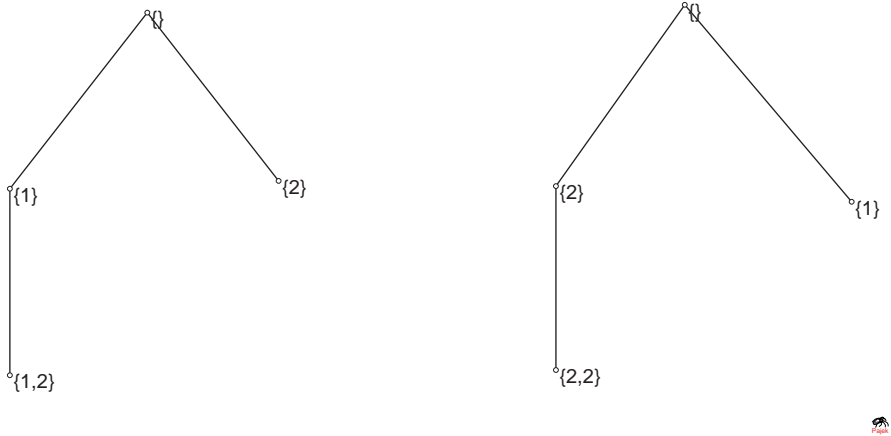}
  \caption{An example showing a non-symmetric tree which is not invariant under the action of $S_{\text{root}}$, the identity $\{1,2\}$ disappears and $\{2,2\}$ appears after relabeling the children of root.}
\end{figure}

Let $\mathcal{F}$ be the set of real-valued functions on the set of vertices $V_T$ (or by abuse of language, on the tree $T$). If $T$ is symmetric, because of the invariance, we can define the action of $S_v$ on $\mathcal{F}$ by $(\sigma\circ f)(T)=f(\sigma\circ T)$, where $f\in\mathcal{F}$ and $\sigma\in S_v$.

\section{Sign Graph of Trees}
Our proof benifits from the studies on sign graphs. The sign graph (strong discrete nodal domain) is a discrete version of Courant's nodal domain. 
Consider $G=(V,E)$ and a real-valued function $f$ on $V$. A \emph{positive (resp. negative) sign graph} is a maximal, connected subgraph of $G$ with vertex set $V'$, such that $f|_{V'}>0$(resp. $f|_{V'}<0$).

The study of sign graphs often deals with generalized Laplacian. A matrix $M$ is called generalized Laplacian matrix of graph $G=(V,E)$ if $M$ has nonpositive off-diagonal entries, and $M(u,v)<0$ if and only if there is an edge in $E$ connecting $u$ and $v$. Obviously, the Laplacian $\mathcal{L}$ studied in Oscar Rojo's papers
\begin{equation}
\mathcal{L}(u,v)=\begin{cases}
\deg{u}& \text{if}\ u=v\\  
-1 & \text{if}\ (u,v)\in E \\
0 & \text{otherwise}
\end{cases}
\end{equation}
is a generalized Laplacian. 

A \emph{Dirichlet Laplacian} $\mathcal{L}_{\Omega}$ on a vertex set $\Omega$, is an operator defined on $\mathcal{F}_{\Omega}$, the set of real-valued functions on $\Omega$. It is defined by
\begin{equation}
\mathcal{L}_{\Omega}f=(\mathcal{L}\tilde{f})|_{\Omega}
\end{equation}
where $\tilde{f}\in\mathcal{F}$ vanishes on $V-\Omega$ and equals to $f$ on $\Omega$. It can be regarded as a Laplacian defined on a subgraph with limit condition, and has many properties similar to those of Laplacian. A Dirichlet Laplacian is also a generalized Laplacian.

\cite{Gladwell2001,Biyikoglu2004,Biyikoglu2007} have proved the following discrete analogues of Courant's Nodal Domain Theorem for generalized Laplacian:

\begin{thm}
Let $G$ be a connected graph and $A$ be generalized Laplacian of $G$, let the eigenvalues of $A$ be non-decreasingly ordered, and $\lambda_k$ be a eigenvalue of multiplicity $r$, i.e.
\begin{equation}
\lambda_1\le\cdots\le\lambda_{k-1}<\lambda_k=\cdots=\lambda_{k+r-1}<\lambda_{k+r}\le\cdots\le\lambda_n
\end{equation}
Then a $\lambda_k$-eigenvalue has at most $k+r-1$ sign graphs.
\end{thm}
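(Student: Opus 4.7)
The plan is to adapt Courant's classical variational argument to the discrete setting. Suppose, for contradiction, that a $\lambda_k$-eigenfunction $f$ has $m \ge k+r$ sign graphs $D_1,\ldots,D_m$. For each $i$, let $u_i$ denote the restriction of $f$ to $D_i$, extended by zero outside. Because these functions have pairwise disjoint supports, they are linearly independent, and $W := \mathrm{span}(u_1,\ldots,u_m)$ has dimension $m$.

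The heart of the argument is to bound the Rayleigh quotient of $A$ on $W$ by $\lambda_k$. I would first analyse the off-diagonal inner products
\begin{equation*}
a_{ij} := \langle u_i, A u_j\rangle = \sum_{v \in D_i,\, w \in D_j} f(v)\, A(v,w)\, f(w),\qquad i \ne j.
\end{equation*}
Only pairs $(v,w)$ with $v$ adjacent to $w$ in $G$ contribute, and for such pairs $A(v,w) < 0$. The maximality clause in the definition of a sign graph forces $f(v)$ and $f(w)$ to have opposite signs whenever $v \in D_i$, $w \in D_j$ with $i \ne j$ and $v \sim w$ --- otherwise $D_i$ and $D_j$ would merge into a single sign graph. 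Consequently $f(v) f(w) < 0$, and combined with $A(v,w) < 0$ this yields $a_{ij} \ge 0$.

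Next, applying the eigenvalue equation at each $v \in D_i$ gives $\sum_j (A u_j)(v) = (A f)(v) = \lambda_k u_i(v)$, so the diagonal terms satisfy $\langle u_i, A u_i\rangle = \lambda_k \|u_i\|^2 - \sum_{j \ne i} a_{ij}$. For $g = \sum c_i u_i \in W$, using the symmetry $a_{ij} = a_{ji}$, a short manipulation then yields
\begin{equation*}
\langle g, A g\rangle \;=\; \lambda_k \|g\|^2 - \sum_{i<j}(c_i - c_j)^2\, a_{ij} \;\le\; \lambda_k \|g\|^2.
\end{equation*}
By the Courant--Fischer min-max principle this forces $\lambda_m \le \lambda_k$; but the assumption $m \ge k+r$ combined with the strict gap $\lambda_{k+r} > \lambda_k$ gives $\lambda_m \ge \lambda_{k+r} > \lambda_k$, a contradiction, so $m \le k+r-1$ as claimed.

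The only delicate point --- and the one place where the strong, strict-sign definition of a sign graph is really used --- is the sign determination $a_{ij} \ge 0$. If one instead allowed weak nodal domains admitting $f = 0$ in the interior, adjacent vertices lying in different domains could share the same sign via a zero, the cross-terms would no longer be signed, and the multiplicity enhancement from $k$ to $k+r-1$ would be lost.
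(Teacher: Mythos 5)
Your proof is correct. Note, though, that the paper does not prove this theorem at all: it is quoted as a known result, attributed to Gladwell et al.\ and B{\i}y{\i}ko\u{g}lu et al., and your argument is essentially the standard variational proof from those sources --- disjointly supported restrictions $u_i$ spanning an $m$-dimensional space, the sign analysis $a_{ij}\ge 0$ forced by maximality of strong sign graphs together with $A(v,w)<0$ on edges, the identity $\langle g,Ag\rangle=\lambda_k\|g\|^2-\sum_{i<j}(c_i-c_j)^2a_{ij}$, and Courant--Fischer giving $\lambda_m\le\lambda_k$ against $\lambda_m\ge\lambda_{k+r}>\lambda_k$. The one step you use silently and could state explicitly is that $f=\sum_j u_j$ even when $f$ has zeros (vertices with $f=0$ lie in no sign graph, where all $u_j$ vanish as well), which is what justifies $\sum_j (Au_j)(v)=(Af)(v)$ in the diagonal computation; your closing remark correctly identifies why the argument needs strong rather than weak nodal domains.
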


And, \cite{Biyikoglu2003} has studied the nodal domain theory of trees. Instead of inequality, we have kind of equality on trees. But we have to study two cases
 
\begin{thm}[T\"urker Biyikoglu]\label{Turker1}
Let $T$ be a tree, let $A$ be a generalized Laplacian of $T$. If $f$ is a $\lambda_k$-eigenfunction without a vanishing coordinate (vertex where $f=0$), then $\lambda_k$ is simple and $f$ has exactly $k$ sign graphs.
\end{thm}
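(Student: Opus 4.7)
The plan is to establish simplicity of $\lambda_k$ first, so that the preceding discrete nodal theorem collapses to the clean upper bound $m\le k$ on the number of sign graphs $m$ of $f$, and then to match it with a lower bound $m\ge k$ by induction on $m$ via a rank-one interlacing across a single sign-change edge.

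\emph{Simplicity.} For any other $\lambda_k$-eigenfunction $g$ I would set $\rho=g/f$; this is well defined because $f$ is nowhere zero. Substituting $g=\rho f$ into $Ag=\lambda_k g$ and subtracting $\rho(v)$ times $Af=\lambda_k f$ at each vertex $v$ yields
\[
\sum_{u\sim v} A(v,u)\,f(u)\,\bigl(\rho(u)-\rho(v)\bigr)=0 \quad\text{for every }v\in V(T).
\]
I would then argue by induction on $|V(T)|$ that any such $\rho$ is constant. For a leaf $v$ with parent $p$ the sum above reduces to a single term with nonzero coefficient $A(v,p)f(p)$, forcing $\rho(v)=\rho(p)$; after pruning $v$ the identity at $p$ is preserved in the smaller tree (the term contributed by $v$ is now zero), so the inductive hypothesis applies. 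Thus $\rho$ is globally constant, $g=cf$, and $r=1$.

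\emph{Lower bound by induction on $m$.} The base case $m=1$ means $f$ has constant sign, so Perron--Frobenius for the generalized Laplacian $A$ forces $\lambda_k$ to be the smallest eigenvalue, $k=1=m$. For $m\ge 2$ I would pick a sign-change edge $e=(u_0,v_0)$ and let $T_1\ni u_0$, $T_2\ni v_0$ be the two components of $T\setminus\{e\}$. Define $B_i$ to agree with $A|_{T_i}$ except at the diagonal on its endpoint, shifted by $\varepsilon_1=A(u_0,v_0)f(v_0)/f(u_0)$ on $T_1$ and by $\varepsilon_2=A(u_0,v_0)f(u_0)/f(v_0)$ on $T_2$. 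Because $e$ changes sign both $\varepsilon_i>0$, each $B_i$ is a generalized Laplacian on $T_i$, and one checks directly that $B_i f|_{T_i}=\lambda_k f|_{T_i}$ with $f|_{T_i}$ still nowhere vanishing. The inductive hypothesis applied to $(T_i,B_i,f|_{T_i})$ tells me that $\lambda_k$ is simple in $\sigma(B_i)$ and occurs at index $m_i$, with $m_1+m_2=m$.

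\emph{Interlacing.} Writing $B=B_1\oplus B_2$, a short computation gives $A=B+M$ where $M$ is supported on the $2\times 2$ block indexed by $\{u_0,v_0\}$ with diagonal entries $-\varepsilon_1,-\varepsilon_2$ and off-diagonal $A(u_0,v_0)$. The key identity $\varepsilon_1\varepsilon_2=A(u_0,v_0)^2$ makes $\det M=0$, while $\operatorname{tr}M<0$ then forces $M$ to be rank-one and negative semi-definite. Cauchy interlacing for this rank-one subtraction gives $\lambda_{m-1}(B)\le\lambda_m(A)\le\lambda_m(B)$. By induction $\lambda_k$ appears in $\sigma(B)$ with multiplicity exactly two, at positions $m-1$ and $m$, so both bounds collapse to $\lambda_k$ and $\lambda_m(A)=\lambda_k$; combined with simplicity this forces $m=k$. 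The delicate point, which I expect to be the main obstacle, is the identity $\varepsilon_1\varepsilon_2=A(u_0,v_0)^2$: it is precisely the sign change across $e$ that makes the rank-one correction degenerate, and that is what lets the inductive count close exactly at $m=k$.
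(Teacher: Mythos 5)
This statement is one the paper does not prove at all: Theorem~\ref{Turker1} is quoted from B{\i}y{\i}koglu's work on nodal domains of trees and used as a black box, so there is no in-paper argument to compare against. Your proof is, as far as I can check, correct and self-contained, and it is essentially the classical Fiedler-style argument by which this result is established in the literature. The three ingredients all work: (i) the ratio trick $\rho=g/f$ gives the identity $\sum_{u\sim v}A(v,u)f(u)(\rho(u)-\rho(v))=0$, and leaf-pruning forces $\rho$ constant, hence simplicity; (ii) cutting a sign-change edge $e=(u_0,v_0)$ and absorbing the lost term into the diagonal via $\varepsilon_i>0$ produces generalized Laplacians $B_i$ on the components with $f|_{T_i}$ still a nowhere-vanishing $\lambda_k$-eigenfunction and $m_1+m_2=m$ (no sign graph can contain a sign-change edge, so the sign graphs split cleanly); (iii) the identity $\varepsilon_1\varepsilon_2=A(u_0,v_0)^2$ does make the correction $M$ rank one and negative semidefinite, and since $\lambda_k$ sits at positions $m-1$ and $m$ of $\sigma(B_1)\oplus\sigma(B_2)$, the interlacing $\lambda_{m-1}(B)\le\lambda_m(A)\le\lambda_m(B)$ pins $\lambda_m(A)=\lambda_k$, whence $m=k$ by simplicity.

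Two small remarks. First, you silently use that $A$ is symmetric (for the real ordered spectrum, for Perron--Frobenius applied to $cI-A$ in the base case $m=1$, and for the interlacing); the paper's definition of generalized Laplacian does not say ``symmetric'' explicitly, but this is the standard convention and is needed throughout, so you should state it. Second, your opening sentence announces that you will combine the Courant-type upper bound $m\le k+r-1$ with a separate lower bound, but your executed argument never uses that upper bound: the interlacing step already yields the exact equality $\lambda_m(A)=\lambda_k$. You could drop the reference to the upper bound entirely without loss.
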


\begin{thm}[T\"urker Biyikoglu]\label{Turker2}
Let $T$ be a tree, let $A$ be a generalized Laplacian of $T$. Let $\lambda$ be an eigenvalue of $A$ all of whose eigenfunctions have vanishing coordinates. Then
\begin{enumerate}
\item eigenfunctions of $\lambda$ have at least one common vanishing coordinates. \item Let $Z$ be the set of all common vanishing points, $G-Z$ is then a forest with component $T_1,\ldots,T_m$, Let $A_1,\ldots,A_m$ be restriction of $A$ to $T_1,\ldots,T_m$, then $\lambda$ is a simple eigenvalue of $A_1,\ldots,A_m$, and $A_i$ has a $\lambda$-eigenfunction without vanishing coordinates, for
$i=1,\ldots,m$. 
\item Let $k_1,\ldots,k_m$ be the positions of $\lambda$ in the spectra of $A_1,\ldots,A_m$ in non decreasing order. Then the number of sign graphs of an eigenfunction of $\lambda$ is at most $k_1+\ldots+k_m$, and there exists an $\lambda$-eigenfunction with $k_1+\ldots+k_m$ sign graphs.
\end{enumerate}
\end{thm}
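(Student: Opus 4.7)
The plan is to deduce all three parts from one elementary lemma — a finite-dimensional real vector space is never a finite union of proper subspaces — together with Theorem~\ref{Turker1} invoked at the end.

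For part~(1) I would argue by contradiction. If the $\lambda$-eigenspace $E_\lambda$ has no common zero, then for every vertex $v\in V_T$ the evaluation functional $f\mapsto f(v)$ is not identically zero on $E_\lambda$, so $H_v := \{f\in E_\lambda : f(v)=0\}$ is a \emph{proper} subspace. Since $V_T$ is finite, $\bigcup_{v\in V_T} H_v \neq E_\lambda$, and any $f$ outside this union violates the standing hypothesis that every $\lambda$-eigenfunction has a vanishing coordinate.

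For part~(2), fix a component $T_i$ of $G-Z$ and consider $W_i := \{f|_{T_i} : f\in E_\lambda\}$. Because $T_i$ is a component of $G-Z$, no edges of $G$ connect $T_i$ to any other $T_j$, so the principal submatrix $A_i$ is a generalized Laplacian of the tree $T_i$; since every $f\in E_\lambda$ vanishes on $Z$, the eigenvalue equation for $A$ restricted to $V_{T_i}$ becomes the eigenvalue equation for $A_i$, and hence $W_i\subset\ker(A_i-\lambda I)$. By the very definition of $Z$, no vertex of $T_i$ is a common zero of $W_i$; the subspace-cover argument of~(1) applied inside $W_i$ therefore produces $g_i\in W_i$ nowhere vanishing on $T_i$, and Theorem~\ref{Turker1} then forces $\lambda$ to be simple in $\mathrm{spec}(A_i)$ and $W_i=\mathbb{R}\,g_i$, completing~(2).

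For part~(3), every $f\in E_\lambda$ is determined by scalars $c_1(f),\ldots,c_m(f)$ with $f|_{T_i}=c_i(f)\,g_i$ and $f|_Z=0$. No sign graph of $f$ can cross $Z$ (no edge joins distinct $T_i,T_j$ outside $Z$, and $f$ vanishes on $Z$), so the sign graphs of $f$ on $G$ are precisely the disjoint union of those of $f|_{T_i}$ over indices with $c_i(f)\neq 0$; each such index supplies exactly $k_i$ sign graphs by Theorem~\ref{Turker1} applied to the nowhere-vanishing $g_i$, giving the upper bound $\sum_i k_i$. To realize it I reuse the subspace trick: the image of the linear map $E_\lambda\to\mathbb{R}^m$, $f\mapsto(c_1(f),\ldots,c_m(f))$, meets no coordinate hyperplane (for each $i$, since $T_i\cap Z=\emptyset$, some $f\in E_\lambda$ has $c_i(f)\neq 0$), hence contains a vector all of whose coordinates are nonzero. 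The step I expect to be most delicate is this last sign-graph bookkeeping: verifying on $G$ — not on $G-Z$ — that the sign graphs of any $\lambda$-eigenfunction really split cleanly as the disjoint union of those of its restrictions $f|_{T_i}$, with no spurious merging along edges running from $T_i$ into $Z$. This amounts to unpacking the definition of ``maximal connected subgraph on which $f$ has constant strict sign,'' but it is the point where all the ingredients must fit together.
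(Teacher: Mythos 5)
The paper does not prove this statement at all: it is quoted verbatim from B{\i}y{\i}koglu's work \cite{Biyikoglu2003} as an imported black box, so there is no in-paper proof to compare against. Judged on its own, your argument is correct and is a genuinely nice reduction: you derive all of Theorem~\ref{Turker2} from Theorem~\ref{Turker1} plus the single elementary lemma that a nonzero real vector space is not a finite union of proper subspaces. Part~(1) is immediate from that lemma; in part~(2) the only points needing care --- that $A_i$ is a generalized Laplacian of the tree $T_i$ and that $W_i\subseteq\ker(A_i-\lambda I)$ because every neighbour of a vertex of $T_i$ lies in $T_i\cup Z$ and $f$ vanishes on $Z$ --- you address, and then Theorem~\ref{Turker1} applied to the nowhere-vanishing $g_i$ gives simplicity and $W_i=\mathbb{R}\,g_i$. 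The step you flag as delicate in part~(3) does go through: a sign graph requires strict sign, hence contains no vertex of $Z$; a connected subgraph of $G$ disjoint from $Z$ lies in a single component $T_i$ of $G-Z$; and since the subgraph of $G$ induced on $V_{T_i}$ is exactly $T_i$, a maximal sign-constant connected subgraph of $G$ contained in $V_{T_i}$ is the same thing as one of $T_i$, so the sign graphs of $f$ are the disjoint union of those of $c_i(f)g_i$ over $i$ with $c_i(f)\neq 0$, each contributing exactly $k_i$. The surjectivity-onto-a-nonzero-coordinate argument for realizing $\sum_i k_i$ is the same subspace-cover lemma again and is fine. In short: correct, complete modulo routine unpacking, and arguably more economical than the source it replaces, since everything tree-specific is quarantined inside Theorem~\ref{Turker1}.
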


In this theorem, if $A$ is the Laplacian, $A_i$ in the second point are in fact the Dirichlet Laplacians on $T_i$. 

\section{Proof}
We first prove the following theorem.

\begin{thm}
Consider a symmetric tree $T$ rooted at $r$. If $f$ is a $\lambda$-eigenfunction of $\mathcal{L}$ such that $f(r)\ne 0$, then we can find a $\lambda$-eigenfunction $\tilde{f}$ such that $l_u=l_v\Rightarrow \tilde{f}(u)=\tilde{f}(v)$, i.e. $\tilde{f}$ takes a same value on vertices at the same level. We call $\tilde{f}$ \emph{the stratified eigenfunction} (see figure 3).
\end{thm}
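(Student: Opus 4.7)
The plan is to build $\tilde f$ by averaging $f$ over the automorphism group of $T$. Each $\sigma \in S_v$ is an automorphism of $T$ (since $T$ is symmetric), hence commutes with $\mathcal{L}$, and so $\sigma \circ f$ is again a $\lambda$-eigenfunction. Let $G$ be the subgroup of $\mathrm{Aut}(T)$ generated by $\bigcup_v S_v$, with $v$ ranging over internal vertices of $T$. By linearity the averaged function
\begin{equation*}
\tilde f \;:=\; \frac{1}{|G|}\sum_{\sigma \in G}\sigma \circ f
\end{equation*}
is still a $\lambda$-eigenfunction, and it is $G$-invariant because left-multiplying the sum by any $\tau \in G$ just permutes its terms. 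Since every generator $\sigma \in S_v$ fixes $v$ itself (it only relabels the children of $v$), it fixes the root $r$ in particular; hence every $\sigma \in G$ fixes $r$, so $\tilde f(r) = f(r) \neq 0$ and $\tilde f$ is a nonzero $\lambda$-eigenfunction.

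It remains to upgrade the $G$-invariance of $\tilde f$ into constancy on each level, and for this I would verify that $G$ acts transitively on every level of $T$. Given $u$ and $v$ at level $\ell$ with identities $I_u = (i_1,\dots,i_\ell)$ and $I_v = (j_1,\dots,j_\ell)$, I would construct an element of $G$ sending $u$ to $v$ by correcting identity coordinates one at a time: choose $\sigma_1 \in S_r$ sending $i_1$ to $j_1$; then let $w_1$ be the child of $r$ with identity $\{j_1\}$ and pick $\sigma_2 \in S_{w_1}$ sending $i_2$ to $j_2$; and so on. Each step is available because vertices at a given level have the same number of children, so the required permutation lies in the relevant $S_w$; after $\ell$ steps the composition maps $u$ to $v$.

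Combining the two ingredients, whenever $l_u = l_v$ I pick $\sigma \in G$ with $\sigma(u) = v$ and use $G$-invariance to conclude $\tilde f(u) = \tilde f(\sigma(u)) = \tilde f(v)$; thus $\tilde f$ is stratified. The main step to double-check is the transitivity claim: it is essentially bookkeeping with the identities, but it genuinely uses the symmetric-tree hypothesis, since without the ``same number of children per level'' condition the local group $S_{w_i}$ need not contain the transposition we want.
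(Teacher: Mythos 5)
Your proof is correct, but it takes a genuinely different --- and considerably more elementary --- route than the paper. The paper does not average over the full symmetry group at once; instead it invokes B{\i}y{\i}koglu's nodal domain theorems for trees: when $f$ has no vanishing coordinate, Theorem \ref{Turker1} forces $\lambda$ to be simple, so $f$ must already be fixed by every transposition in every $S_v$ and is therefore itself stratified; when $f$ has vanishing coordinates, the paper identifies common vanishing levels, uses simplicity of $\lambda$ for the Dirichlet Laplacian on each component between vanishing levels, and then performs an iterative symmetrization one vertex at a time, with a separate termination argument. Your one-shot averaging over the group $G$ generated by the $S_v$ bypasses all of this machinery: you only need that each $\sigma\in S_v$ is an automorphism commuting with $\mathcal{L}$ and fixing $r$ (so $\tilde f(r)=f(r)\neq 0$ guarantees $\tilde f\neq 0$, which is exactly where the hypothesis $f(r)\neq 0$ enters), plus transitivity of $G$ on each level, which your coordinate-by-coordinate correction of identities establishes correctly and which genuinely uses the symmetric-tree hypothesis. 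What the paper's heavier route buys is extra structural information that your argument does not provide --- e.g.\ that in the non-vanishing case $f$ itself is already stratified (not merely that some stratified eigenfunction exists in the eigenspace), and the description of vanishing levels and of the identical components between them --- some of which the paper leans on informally in its later counting argument. For the theorem as stated, your proof is complete and arguably cleaner; the only point worth writing out explicitly is the routine verification that $\tau\circ(\sigma\circ f)=(\sigma\tau)\circ f$ (or $(\tau\sigma)\circ f$, depending on convention), so that composing with $\tau\in G$ really does just permute the summands of the average.
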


\begin{figure}[ht]
  \centering
  \includegraphics[scale=0.25]{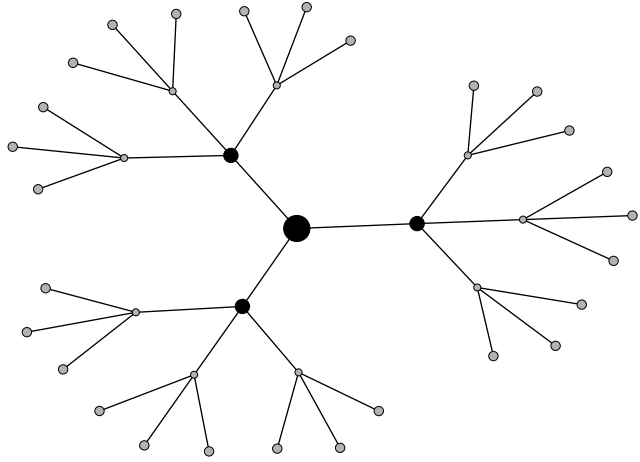}
  \caption{An example of stratified structure on the whole tree. The size of vertex indicates the absolute value of $\tilde{f}$, the color of vertex indicates the sign, gray for negative, black for positive, white for 0.}
\end{figure}

\begin{proof}

We first consider the case where $f$ has no vanishing coordinate, and prove that $f$ is itself a stratified eigenfunction. By the theorem \ref{Turker1} we know that $\lambda$ is simple. 

Assume that $l_u=l_v$ and $f(u)\ne f(v)$. There is a unique path connecting $u$ and $v$. Choose on this path a vertex $p$ with the lowest level (closest to the root), it is the first common ancestor of $u$ and $v$, so $I_u^{l_p+1}$ and $I_v^{l_p+1}$ are the labels of the last different ancestors of $u$ and $v$. Take $\sigma=\tau(I_u^{l_p+1}, I_v^{l_p+1})\in S_p$ where $\tau(i,j)$ is the transposition of $i$ and $j$, $\sigma\circ f$ is then another $\lambda$-eigenfunction by symmetry, but $\sigma\circ f\ne f$ since $f(u)\ne f(v)$, which violates the simplicity of $\lambda$. So $f(u)=f(v)$ and $f$ itself is a the stratified eigenfunction. This is also true for Dirichlet Laplacian.

We study then the case with vanishing points. By theorem \ref{Turker2}, we know that all $\lambda$-eigenfunctions have at least one common vanishing coordinate. Let $v$ be a common vanishing coordinate, a vertex $u$ of the same level must also be one. If it is not the case, a same argument as above will lead to a contradiction that there is another $\lambda$-eigenfunction who does not vanish on $v$. We can now use the term ``vanishing level''. There can be no two consecutive vanishing levels, if it happens, all the lower levels must vanish, so is the root, which is assumed to be non zero.

The common vanishing coordinates divide the tree into components without vanishing points. By theorem \ref{Turker2}, $\lambda$ is a simple eigenvalue of Dirichlet Laplacian on each component. Components between two vanishing levels are identical, so they have a same eigenfunction (up to a factor) for Dirichelt Laplacian. 

We can construct $\tilde{f}$ as following: Choose a vanishing vertex $v$ on whose children $f$ takes different values. Symmetrize the children of $v$ by taking 
\begin{equation}
f'=\frac{1}{|C_v|!}\sum_{\sigma\in S_v}\sigma\circ f
\end{equation}
If $f'$ is a stratified eigenfunction, $\tilde{f}=f'$ and the construction is finished. If not, there must be another vertex whose children are not symmetrized, then let $f=f'$ and repeat the procedure.

This procedure will end in a finite number of steps because the symmetrization can not be undone and the tree is finite. When all the vertices have their children symmetrized, the result must be stratified, because $\lambda$ is simple as an eigenvalue of Dirichlet Laplacian on each non-vanishing component.
\end{proof}

This theorem is also true for Dirichlet Laplacian on a rooted subtree (see figure 4). Together with the following theorem, we can find all the eigenfunctions of Laplacian.

\begin{figure}[ht]
  \centering
  \includegraphics[scale=0.25]{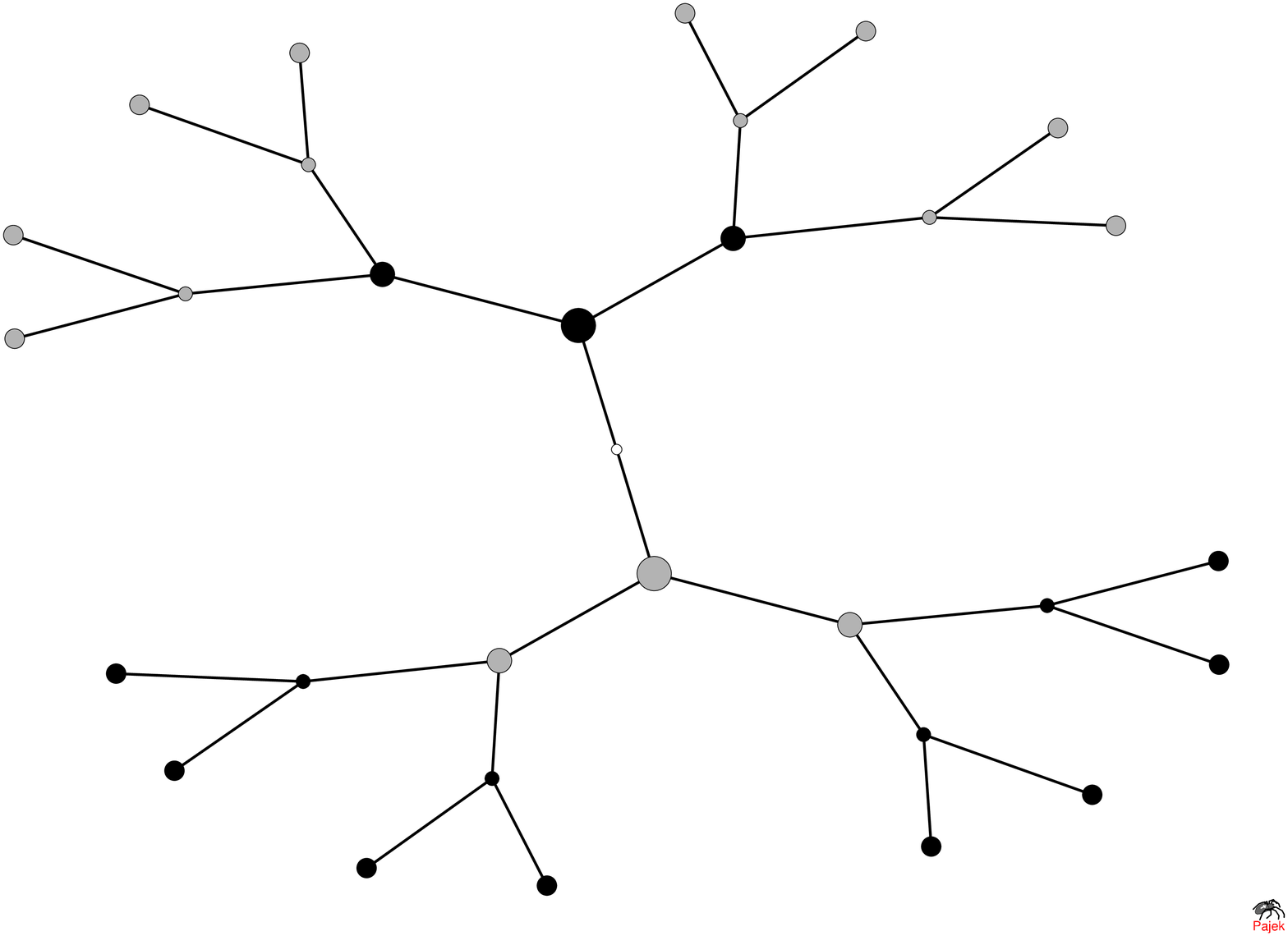}
  \caption{An example of stratified structure on subtrees.}
\end{figure}

\begin{thm}
Consider a symmetric tree $T$ with root $r$. Let $v$ be a non-root vertex. If $\lambda$ is an eigenvalue of $\mathcal{L}_{T_v}$ whose eigenfunctions do not vanish at $v$, then $\lambda$ is also an eigenvalue of $\mathcal{L}$, whose multiplicity is at least $n(l_v)-n(l_v-1)$.
\end{thm}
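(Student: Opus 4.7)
The plan is to build a family of $\lambda$-eigenfunctions of $\mathcal{L}$ by gluing copies of a Dirichlet eigenfunction on the $n(l_v)$ sibling subtrees $T_{u_1},\dots,T_{u_{n(l_v)}}$ rooted at the level-$l_v$ vertices. First apply the previous theorem to $\mathcal{L}_{T_v}$: since by hypothesis some $\lambda$-eigenfunction does not vanish at $v$, we obtain a stratified one, call it $\tilde g$, with $\tilde g(v)\ne 0$. By the symmetry of $T$, each $T_{u_i}$ is isomorphic to $T_v$, so we transport $\tilde g$ to a stratified Dirichlet $\lambda$-eigenfunction $\tilde g_i$ on $T_{u_i}$.

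Given scalars $c_1,\dots,c_{n(l_v)}$, define $f$ on $T$ by $f|_{T_{u_i}}=c_i\tilde g_i$ and $f=0$ on vertices of level strictly below $l_v$. I would then verify $\mathcal{L}f=\lambda f$ vertex by vertex. Strictly inside each $T_{u_i}$, every neighbor stays in $T_{u_i}$, so the equation is inherited from $\mathcal{L}_{T_{u_i}}\tilde g_i=\lambda\tilde g_i$. At $u_i$ itself the parent lies outside $T_{u_i}$ and carries the value $0$, so this missing link contributes exactly the Dirichlet boundary term in $\mathcal{L}_{T_v}\tilde g(v)$, and the equation at $u_i$ collapses to $\mathcal{L}_{T_v}\tilde g(v)=\lambda\tilde g(v)$, which holds. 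At a vertex of level strictly less than $l_v-1$ all values of $f$ in sight vanish and the equation is trivial. The only nontrivial equation sits at each vertex $p$ of level $l_v-1$: there $f(p)=0$, the parent of $p$ (if any) contributes $0$, and the equation reduces to $\tilde g(v)\sum_{u_i\in C_p}c_i=0$, i.e.\ $\sum_{u_i\in C_p}c_i=0$.

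This gives one linear constraint per vertex of level $l_v-1$, hence $n(l_v-1)$ constraints in total, and these are linearly independent because they involve pairwise disjoint blocks of coefficients (children of distinct parents). Consequently the space of admissible coefficient vectors has dimension exactly $n(l_v)-n(l_v-1)$. The map $(c_i)\mapsto f$ is injective (the values of $f$ at level $l_v$ recover the $c_i$, since $\tilde g(v)\ne 0$) and sends nonzero coefficient vectors to nonzero functions, yielding a subspace of the $\lambda$-eigenspace of $\mathcal{L}$ of dimension at least $n(l_v)-n(l_v-1)$, as claimed.

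The only delicate point is the bookkeeping at the $u_i$: one must notice that the parent of $u_i$, being outside the support of $f$, plays exactly the role of the Dirichlet boundary of $T_v$, so that the same $\tilde g$ can simultaneously serve as a Dirichlet eigenfunction on $T_v$ and as the local shape of an eigenfunction of the full Laplacian. Once that alignment is observed, all remaining verifications are routine consequences of the stratification of $\tilde g$ and the isomorphisms $T_{u_i}\cong T_v$ provided by the symmetry of $T$.
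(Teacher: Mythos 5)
Your construction is correct, and it produces exactly the same eigenspace as the paper, but you reach it by a different route. The paper argues recursively up the tree using the symmetric group action: from a Dirichlet $\lambda$-eigenfunction $f$ on $T_u$ it forms $f-\tau(i,j)\circ f$ to get $|C_p|-1$ independent eigenfunctions of $\mathcal{L}|_{T_p}$ vanishing at $p$, and then multiplies the count by $|C_{p'}|$ at each higher level, arriving at $(|C_p|-1)\prod_{i=0}^{l_p-1}c(i)=n(l_v)-n(l_v-1)$. You instead build the whole eigenspace in one shot: zero-extend scaled copies $c_i\tilde g_i$ on all $n(l_v)$ sibling subtrees and read off the single nontrivial condition $\sum_{u_i\in C_p}c_i=0$ at each level-$(l_v-1)$ vertex, which gives the dimension count directly as $n(l_v)$ variables minus $n(l_v-1)$ independent constraints on disjoint blocks. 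Your version is more self-contained and makes the exact dimension of the constructed subspace transparent, and it correctly identifies the one delicate point (the absent parent of $u_i$ playing the role of the Dirichlet boundary, together with $\deg_T(u_i)=\deg_T(v)$ by symmetry). What the paper's recursive formulation buys in exchange is the intermediate statement about multiplicities in each $\mathcal{L}|_{T_p}$ and the explicit difference form $f-\tau(i,j)\circ f$, which it reuses afterwards to argue that these eigenfunctions (all vanishing at level $l_v-1$ and above) are independent of the stratified ones when the final count $\sum_{i}(k-i)[n(i)-n(i-1)]+k=|V_T|$ is assembled. One small remark: the stratification of $\tilde g$ is not actually needed for your argument --- any Dirichlet eigenfunction with $\tilde g(v)\ne 0$, transported by the rooted-tree isomorphisms $T_{u_i}\cong T_v$, works verbatim; stratification only makes the transport canonical.
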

\begin{proof}
Let $u$ be a non-root vertex of a symmetric tree $T$ and $p$ its parent. We first notice that, if $\lambda$ is an eigenvalue of Dirichlet Laplacian $\mathcal{L}|_{T_u}$, $\lambda$ is also an eigenvalue of Laplacian on $\mathcal{L}|_{T_p}$. Because for a $\lambda$-eigenfunction $f$ of Dirichlet Laplacian $\mathcal{L}|_{T_u}$, let $i$ be the label of $u$, $\forall j\ne i$, the action of $\tau(i,j)\in S_p$ on $f$ will help constructing another $\lambda$-eigenfunction $f'$ as, 
\begin{equation}\label{copy}
f'=\begin{cases}
f-\tau(i,j)\circ f & \text{if}\ f(u)\ne 0\\
\tau(i,j)\circ f & \text{if}\ f(u)=0
\end{cases}
\end{equation}

We see that the multiplicity of $\lambda$ as an eigenvalue of $\mathcal{L}|_{T_p}$ is $|C_p|-1$ if $f(u)\ne 0$, because $f'$ constructed by differenct $j$ are independent. If $f(u)=0$, $f$ itself is also an eigenfunction of $\mathcal{L}|_{T_p}$ so the multiplicity is $|C_p|$. 

If $f(v)\ne 0$, let $p$ be the parent of $v$. We conclude by a recursive argument, that the multiplicity of $\lambda$ as an eigenvalue of $\mathcal{L}=\mathcal{L}|_{T_r}$ is 
\begin{equation}
(|C_p|-1)\prod_{i=0}^{l_p-1}c(i)=\prod_{i=0}^{l_p}c(i)-\prod_{i=0}^{l_p-1}c(i)=n(l_p+1)-n(l_p)=n(l_v)-n(l_v-1)
\end{equation}
\end{proof}

It is obvious that a stratified eigenfunction of (Dirichlet) Laplacian can not vanish on the root of (sub)tree, otherwise it will vanish everywhere. So for a vertex $v$ of a $k$-level symmetric tree, we can find $k-l_v$ stratified independent eigenfunctions of $\mathcal{L}|_{T_v}$ (because $T_v$ have $k-l_v$ levels), and none of them vanishes at $v$. The eigenfunctions who vanishes at $v$ are independent to these stratified eigenfunctions, because of the ``-'' in the $f'$ given in equation \ref{copy}.

Let's count how many independent eigenfunctions have been found. For a vertex $v$, we find $k-l_v$ stratified eigenfunction of $\mathcal{L}|_{T_v}$. For each of them, there are $n(l_v)-n(l_v-1)$ independent eigenfunctions of $\mathcal{L}$ with the same eigenvalue, who are in fact stratified on maximal subtrees rooted on vertices at level $l_v$. As a special case, the root have $k$ stratified eigenfunctions. The total number of independent eigenfunctions is then (by summation by parts)
\begin{equation}
\sum_{i=1}^{k-1}(k-i)[n(i)-n(i-1)]+k=-(k-1)n(0)+\sum_{i=1}^kn(i)+k=\sum_{i=0}^k n(i)=|V_T|
\end{equation}
Which means that all the eigenfunctions have been found.

Now we can prove Oscar Rojo's theorem \ref{goal}

\begin{proof}[Proof of Theorem \ref{goal}]
We search for stratified eigenfunctions of the (Dirichlet) Laplacians on symmetric (sub)trees. In fact, knowing that the eigenfunctions are stratified, we can regard them as a function of level, and write the Laplacian equation for stratified eigenfunctions as $\mathcal{L}f(l)=d(l)f(l)-f(l-1)-c(l)f(l+1)=\lambda f$. For a subtree rooted at level $l_0$, $\lambda$ is an eigenvalue of matrix
\begin{equation}
S=
\begin{pmatrix}
 d(l_0) & -c(l_0) & 0 &\cdots & 0 \\
 -1 & d(l_0+1) & \ddots &   & \vdots\\
0 & \ddots & \ddots & \ddots & 0\\
\vdots &   & \ddots & d(k-2) & -c(k-2)\\
0 & \cdots & 0 & -1 & d(k-1)\\
\end{pmatrix}
\end{equation}
Let $R$ be a $(k-l_0)\times(k-l_0)$ diagonal matrix whose i\textsuperscript{th} element is $(-1)^i\sqrt{n(i-1+l_0)},1\le i\le k-l_0$, we have
\begin{equation}
T=RSR^{-1}=
\begin{pmatrix}
 d(l_0) & \sqrt{c(l_0)} & 0 &\cdots & 0 \\
 \sqrt{c(l_0)} & d(l_0+1) & \ddots &   & \vdots\\
0 & \ddots & \ddots & \ddots & 0\\
\vdots &   & \ddots & d(k-2) & \sqrt{c(k-2)}\\
0 & \cdots & 0 & \sqrt{c(k-2)} & d(k-1)\\
\end{pmatrix}
\end{equation} 
which is similar to $S$. Note that $d(k-1)=1$, the matrix given by Oscar Rojo is recognized (in a different notation) when $l_0=0$, as well as its principal submatrices when $l_0>0$.
\end{proof}

\section{Combinition of two symmetric trees}
The tree $T$ studied in \cite{Rojo2006} are obtained by identifying the roots of two symmetric trees $T_1$ and $T_2$. We can still use the terminology ``level'' by allowing negative levels, and use the sign of level to distinguish the two symmetric subtrees. So a vertex has always a bigger absolute level than its parent. The tree is still invariant under $S_v$, except for the root, where we must distinguish the children at $+1$ level and at $-1$ level, and define respectively the action of $S_r^+$ and $S_r^-$. The stratified structure is still valid.

Let's count again how many eigenfunctions can be found with the help of stratified structure. An eigenfunction $f$ of $\mathcal{L}|_{T_1}$ who vanishes at the root is also an eigenfunction of $T$. From the previous analysis, we can find $|V_{T_1}|-k_1$ such eigenfunctions, where $k_1$ is the number of levels in $T_1$. Same for $T_2$. Then we search for the stratified eigenfunctions of $\mathcal{L}$, there are $k_1+k_2-1$. We have found in all $|V_{T_1}|+|V_{T_2}|-1=|V_T|$ independent eigenfunctions. That is all the eigenfunctions.

So by the same argument as before, with the knowledge of stratified structure, we can write the Laplacian equation for stratified eigenfunctions, and the result of Oscar Rojo is immediate.

\section*{Acknowledgment}
This work was done at Max-Planck-Institut f\"ur Mathematik in den Naturwissenschaften as an internship at the end of my third year in Ecole Polytechnique. I'd like to thank Frank Bauer for the very helpful discussions and suggestions. The figures in this paper are generated by Pajek, a program for analysing graphs.


\bibliographystyle{model1-num-names}
\bibliography{References}

\end{document}